\documentclass{IJCAS}

\usepackage{url}
\usepackage{array,tabularx}
\usepackage{multicol,empheq,verbatim} 
\newcommand{\norm}[1]{\left\lVert#1\right\rVert}
\usepackage{amsmath,amssymb,amsfonts}
\usepackage{graphicx}
\usepackage[ruled]{algorithm2e}
\usepackage{algorithmic}
\allowdisplaybreaks

\journalvolumn{23}
\journalnumber{2}
\journalyear{2025}
\setarticlestartpagenumber{638}

\allowdisplaybreaks

\begin{document}

\title{A Generalized Primal-Dual Correction Method for Saddle-Point Problems with a Nonlinear Coupling Operator}

\author{Sai Wang\orcid{0009-0003-5599-5033} and Yi Gong*\orcid{0000-0001-7392-8991}}

\begin{abstract}
The saddle-point problems (SPPs) with nonlinear coupling operators frequently arise in various control systems, such as dynamic programming optimization, H-infinity control, and Lyapunov stability analysis. However, traditional primal-dual methods are constrained by fixed regularization factors. In this paper, a novel generalized primal-dual correction method (GPD-CM) is proposed to adjust the values of regularization factors dynamically. It turns out that this method can achieve the minimum theoretical lower bound of regularization factors, allowing for larger step sizes under the convergence condition being satisfied. The convergence of the GPD-CM is directly achieved through a unified variational framework. Theoretical analysis shows that the proposed method can achieve an ergodic convergence rate of $O(1/t)$. Numerical results support our theoretical analysis for an SPP with an exponential coupling operator.
\end{abstract}

\begin{keywords}
Saddle-point problem, prediction-correction method, nonlinear optimization, variational analysis.
\end{keywords}

\maketitle

\makeAuthorInformation{
Manuscript received May 30, 2024; revised July 25, 2024; accepted July 29, 2024. Recommended by Guest Editor Jong Min Lee. This work was supported in part by the National Natural Science Foundation of China under Grant 62371218, and in part by Shenzhen Science and Technology Program under Grants KCXFZ20211020174802004 and JCYJ20241202125328038.\\

Sai~Wang~and~Yi~Gong are with the Department of Electrical
and Electronic Engineering, Southern University of Science and Technology,
1088 Xueyuan Avenue, Shenzhen 518055, China (e-mail: \{wangs8, gongy\}@sustech.edu.cn).\\
* Corresponding author.
}

\runningtitle{2024}{Sai Wang and Yi Gong}{A Generalized Primal-Dual Correction Method for Saddle-Point Problems with a Nonlinear Coupling Operator}{024}{0453}{8}

\section{INTRODUCTION}
Saddle-point problems (SPPs) are widely encountered in control engineering, appearing in areas such as Lyapunov stability analysis \cite{MM1}, dynamic network design \cite{MM2}, and optimal control design. Additionally, many convex control problems with nonlinear constraints—like model predictive control, support vector machines, and H-infinity control—can often be reformulated as SPPs involving nonlinear coupling operators \cite{MM3,JC1,JC2}. Despite their prevalence, solving these SPPs with nonlinear coupling operators presents significant challenges. In this paper, we consider an SPP with a nonlinear coupling operator:
\begin{equation}
\min_{\mathbf{v}\in \mathcal{V}}\max_{\mathbf{w}\in \mathcal{W}}\mathcal{L}(\mathbf{v},\mathbf{w}):=f(\mathbf{v})+\langle\mathbf{w},\boldsymbol{\Phi}(\mathbf{v})\rangle-g(\mathbf{w}),\label{e1}
\end{equation}

\noindent where two closed sets $\mathcal{V}\subseteq \mathbb{R}^{n}$ and $\mathcal{W}\subseteq \mathbb{R}^{m}$ are convex. Two convex functions $\{f:\mathcal{V}\rightarrow \mathbb{R}$ and $g:\mathcal{W}\rightarrow \mathbb{R}\}$ are not necessarily smooth. The nonlinear function $\boldsymbol{\Phi}:\mathbb{R}^{n}\rightarrow \mathbb{R}^{m}$ is both convex and continuously differentiable over $\mathcal{V}$. As a result, the nonlinear coupling operator $\langle\mathbf{w},\boldsymbol{\Phi}(\mathbf{v})\rangle$ is convex on $\mathcal{V}$ and linear on $\mathcal{W}$. For a given SPP, its saddle point is depicted in Fig. \ref{fig:01}. To get the saddle point of (\ref{e1}), for the $k$-th iteration, the ideal proximal point algorithm generates a new sequence by solving the following problem:
\begin{equation}
\begin{aligned}
(\mathbf{v}^{k+1},\mathbf{w}^{k+1})=\arg\min\max\left\{\mathcal{L}(\mathbf{v},\mathbf{w})+\frac{r_{k}}{2}\Vert \mathbf{v}-\mathbf{v}^{k}\Vert^{2}\right.\\
\left.-\frac{s_{k}}{2}\Vert\mathbf{w}-\mathbf{w}^{k} \Vert^{2}\mid \mathbf{v}\in \mathcal{V}, \mathbf{w}\in \mathcal{W}\right\},    
\end{aligned}\label{vv}
\end{equation}
where $r_{k}>0, s_{k}>0$ are two regularization factors. The default norm used is the 2-norm. Updating both variables simultaneously presents a challenge, leading to the common practice of optimizing them alternately. 
\begin{figure}[t]
    \centering
    \includegraphics[width=3.0in]{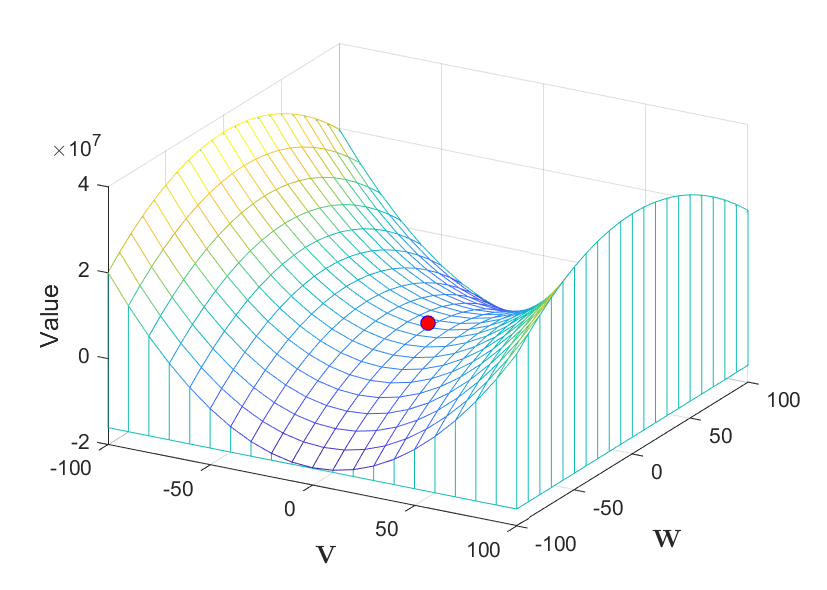}
    \caption{Saddle-point in a saddle-point problem.}
    \label{fig:01}
\end{figure}
In \cite{L1}, the Arrow-Hurwicz method was proposed for linear and nonlinear programming. To tackle SPPs with linear coupling operators, a variant known as the Chambolle and Pock method was presented in \cite{R4}. This method includes additional compensation steps and enforces stricter conditions to ensure convergence. 
 In \cite{New2}, the authors introduced a systematic procedure for analyzing degenerate preconditioned proximal point algorithms, enhancing splitting algorithms in convex minimization problems. In \cite{R9}, the authors proposed a novel accelerated primal-dual method without smoothing the objective function. Another advancement, as proposed in \cite{R10}, is an adaptive primal-dual method capable of automatically tuning step size parameters. Furthermore, in \cite{R5}, a generalized primal-dual method was developed to allow larger step sizes in resulting subproblems, offering a straightforward and versatile approach to enhancing numerical performance. Unfortunately, these developments primarily target SPPs with linear coupling operators. Recent developments have addressed the more intricate challenge of nonlinear coupling operators. Smooth convex-concave SPPs have been studied in \cite{R11}, exploring the iteration complexities of optimistic gradient and extra-gradient methods. Accelerated methods for composite structures were proposed in \cite{R15}, and \cite{R14} leveraged the alternating direction method of multipliers for decomposable SPPs. To address the problem involving the nonlinear coupling operator, the variant primal-dual hybrid gradient method (PDHGM) was developed in \cite{R3}. However, these primal-dual methods often rely on fixed regularization factors, which significantly hinder convergence rates. 

As noted in \cite{R5}, the numerical performance of primal-dual methods depends on the choice of regularization factors. A small theoretical lower bound of $r_{k}\cdot s_{k}$ allows for a larger step size. Inspired by insights from \cite{R5} and \cite{R6}, the generalized primal-dual correction method (GPD-CM) is proposed to enhance the step size by achieving a small theoretical lower bound. This approach offers potential benefits such as adaptive step sizes, trade-off control, and theoretical insights.
The contributions of this paper can be summarized as follows.
\begin{enumerate}
\item The  GPD-CM is developed to solve the SPP with the nonlinear coupling operator.
\item Under the variational framework, the convergence of the proposed method is established.
\item Compared to existing work, the proposed method achieves a larger step size by dynamically updating regularization parameters.
\end{enumerate}
This paper is structured as follows. Section \ref{section2} introduces preliminaries. In Section \ref{section3}, we design the GPD-CM and analyze its convergence. In Section \ref{section4}, numerical results are reported, and Section \ref{section5} concludes this paper.

\section{Preliminaries}\label{section2}
In this section, we introduce the preliminaries necessary for further analysis. The derivative of a vector function (a vector whose components are functions) $\boldsymbol{\Phi}(\mathbf{v})=[\phi_{1}(\mathbf{v}),\cdots,\phi_{m}(\mathbf{v})]^{T}$, concerning an input vector, $\mathbf{v}=[v_{1},\cdots,v_{n}]^{T}$, can be written as
\begin{equation}
\nabla  \boldsymbol{\Phi}(\mathbf{v})=\left( \begin{array}{c} \nabla \phi_{1}(\mathbf{v}) \\ \vdots\\ \nabla \phi_{m}(\mathbf{v})\end{array} \right)=\left( \begin{array}{ccc} \frac{\partial \phi_{1}}{\partial v_{1}} &\cdots&\frac{\partial \phi_{1}}{\partial v_{n}} \\ \vdots&\ddots&\vdots\\ \frac{\partial \phi_{m}}{\partial v_{1}}  &\cdots&\frac{\partial \phi_{m}}{\partial v_{n}} \end{array} \right).\nonumber
\end{equation}

\begin{lemma}\label{lemma1}
Let $h_{1}(\mathbf{x})$ and $h_{2}(\mathbf{x})$ be convex, and $h_{2}(\mathbf{x})$ be continuously differentiable. If the solution set of the minimization problem $\min\{h_{1}(\mathbf{x})+h_{2}(\mathbf{x})\vert \mathbf{x}\in \mathcal{X}\}$ is nonempty, for a closed convex set $\mathcal{X}\subseteq \mathbb{R}^{n}$, the vector $\mathbf{x}^{*}$ represents an optimal solutions, i.e., 
\begin{equation}
\mathbf{x}^{*}\in \arg \min \left\{h_{1}(\mathbf{x})+h_{2}(\mathbf{x})\mid\mathbf{x}\in \mathcal{X}\right\}. \label{old1}
\end{equation}
For $\mathbf{x}^{*}\in \mathcal{X}$, we can get the variational inequality
\begin{equation}
h_{1}(\mathbf{x})-h_{1}(\mathbf{x}^{*})+(\mathbf{x}-\mathbf{x}^{*})^{T}\nabla h_{2}(\mathbf{x}^{*})\ge0,\quad\forall \ \mathbf{x}\in \mathcal{X}.\label{old2}
\end{equation}
\end{lemma}
\begin{proof}The proof can be found in \cite{R5} (Lemma 2.1).\end{proof}
\noindent The saddle point of the SPP with the nonlinear coupling operator $(\mathbf{v}^{*},\mathbf{w}^{*})$ satisfies 
\begin{subequations}
\begin{empheq}[left=\empheqlbrace]{alignat=2}
         &\mathbf{v}^{*}\in\arg\min\left\{\mathcal{L}(\mathbf{v},\mathbf{w}^{*})\mid \mathbf{v}\in \mathcal{V}\right\},\label{ss}\\ 
         &\mathbf{w}^{*}\in\arg\max\left\{\mathcal{L}(\mathbf{v}^{*},\mathbf{w})\mid \mathbf{w}\in \mathcal{W}\right\}.\label{sss}
                \end{empheq}
\end{subequations}
The above optimal condition can be written as a saddle point inequality:
\begin{equation}
 \forall \mathbf{w}\in\mathcal{W}, \forall \mathbf{v}\in\mathcal{V},\mathcal{L}(\mathbf{v}^{*},\mathbf{w})\leq \mathcal{L}(\mathbf{v}^{*},\mathbf{w}^{*})\leq \mathcal{L}(\mathbf{v},\mathbf{w}^{*}).
\end{equation}
By Lemma \ref{lemma1}, for $\mathbf{v}^{*}\in \mathcal{V}$ and $\mathbf{w}^{*}\in  \mathcal{W}$, formulae (\ref{ss}) and (\ref{sss}) can be further expressed as the following variational inequalities:
\begin{empheq}[left=\empheqlbrace]{alignat=2}
& f(\mathbf{v})-f(\mathbf{v}^{*})+(\mathbf{v}-\mathbf{v}^{*})^{T}\nabla \boldsymbol{\Phi}(\mathbf{v}^{*})^{T}\mathbf{w}^{*}\ge0,& \  \forall \ \mathbf{v}\in \mathcal{V},\nonumber\\ 
 &g(\mathbf{w})-g(\mathbf{w}^{*})+(\mathbf{w}-\mathbf{w}^{*})^{T}[-\boldsymbol{\Phi}(\mathbf{v}^{*})]\ge0,&\ \forall \ \mathbf{w}\in \mathcal{W},\nonumber
 \end{empheq}
where the second variational inequality arises from a maximization problem, which is the reason for the minus sign appearing in the inequality. The above two inequalities can be reformulated as a monotone variational inequality:
\begin{equation}
  \vartheta(\boldsymbol{\eta})-\vartheta(\boldsymbol{\eta}^{*})+(\boldsymbol{\eta}-\boldsymbol{\eta}^{*})^{T}\boldsymbol{\Gamma}(\boldsymbol{\eta}^{*})\ge0, \quad\forall \ \boldsymbol{\eta}\in \mathcal{Z}, \label{e7}
\end{equation}
where $\vartheta(\boldsymbol{\eta})= f(\mathbf{v})+g(\mathbf{w})$,
\begin{equation}
 \boldsymbol{\eta}=\left( \begin{array}{c} \mathbf{v} \\[0.1cm]
           \mathbf{w} \end{array} \right), 
 \boldsymbol{\Gamma}(\boldsymbol{\eta})=\left( \begin{array}{c} \nabla \boldsymbol{\Phi}(\mathbf{v})^{T}\mathbf{w}\\ [0.1cm]-\boldsymbol{\Phi}(\mathbf{v})\end{array} \right)\mbox{ and } \mathcal{Z}=\mathcal{V}\times\mathcal{W}.\nonumber
\end{equation}
By Lemma 2, the coupling operator $\boldsymbol{\Gamma}(\boldsymbol{\eta})$ is monotonic. 

\begin{lemma}
Let two sets $\mathcal{V}\subseteq \mathbb{R}^{n}$ and $\mathcal{W}\subseteq \mathbb{R}^{m}$ be closed and convex.  For $\mathbf{v}\in \mathcal{V}$ and $\mathbf{w}\in \mathcal{W}$, the coupling operator $\langle \mathbf{w},\boldsymbol{\Phi}(\mathbf{v})\rangle$ is convex and continuously differentiable on $\mathcal{V}$ and linear on $\mathcal{W}$. 
The coupling operator $\boldsymbol{\Gamma}(\boldsymbol{\eta})$ satisfies the following monotonic form:
\begin{equation}
(\boldsymbol{\eta}-\tilde{\boldsymbol{\eta}})^{T}[\boldsymbol{\Gamma}(\boldsymbol{\eta})-\boldsymbol{\Gamma}(\tilde{\boldsymbol{\eta}})]\ge0, \quad\forall \ \boldsymbol{\eta}, \tilde{\boldsymbol{\eta}}\in \mathcal{Z}.\label{old20}
\end{equation}\label{lemma2}
\end{lemma}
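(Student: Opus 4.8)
The plan is to establish the monotonicity inequality (\ref{old20}) by a direct expansion of the bilinear form according to the block partition $\mathbf{w}=(\mathbf{x},\mathbf{y})$, and then to recognize each resulting group of terms as the residual in the first-order convexity inequality applied to the coupling in its primal argument. Substituting the definition of $\boldsymbol{\Gamma}$, the left-hand side of (\ref{old20}) becomes
\begin{align}
&(\mathbf{x}-\tilde{\mathbf{x}})^{T}\!\left[\nabla\boldsymbol{\Phi}(\mathbf{x})^{T}\mathbf{y}-\nabla\boldsymbol{\Phi}(\tilde{\mathbf{x}})^{T}\tilde{\mathbf{y}}\right]\nonumber\\
&\qquad-(\mathbf{y}-\tilde{\mathbf{y}})^{T}\!\left[\boldsymbol{\Phi}(\mathbf{x})-\boldsymbol{\Phi}(\tilde{\mathbf{x}})\right].\nonumber
\end{align}

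First I would expand the four products and regroup, collecting the terms carrying the factor $\mathbf{y}$ and, separately, those carrying $\tilde{\mathbf{y}}$. Since the coupling is linear in the dual variable, the pieces $(\mathbf{y}-\tilde{\mathbf{y}})^{T}\boldsymbol{\Phi}(\mathbf{x})$ and $(\mathbf{y}-\tilde{\mathbf{y}})^{T}\boldsymbol{\Phi}(\tilde{\mathbf{x}})$ split additively, and after using $(\mathbf{x}-\tilde{\mathbf{x}})^{T}\nabla\boldsymbol{\Phi}(\mathbf{x})^{T}\mathbf{y}=\mathbf{y}^{T}\nabla\boldsymbol{\Phi}(\mathbf{x})(\mathbf{x}-\tilde{\mathbf{x}})$ the expression collapses to
\begin{align}
&\mathbf{y}^{T}\!\left[\boldsymbol{\Phi}(\tilde{\mathbf{x}})-\boldsymbol{\Phi}(\mathbf{x})-\nabla\boldsymbol{\Phi}(\mathbf{x})(\tilde{\mathbf{x}}-\mathbf{x})\right]\nonumber\\
&\qquad+\tilde{\mathbf{y}}^{T}\!\left[\boldsymbol{\Phi}(\mathbf{x})-\boldsymbol{\Phi}(\tilde{\mathbf{x}})-\nabla\boldsymbol{\Phi}(\tilde{\mathbf{x}})(\mathbf{x}-\tilde{\mathbf{x}})\right].\nonumber
\end{align}
Then, for a fixed dual vector $\mathbf{y}$, I would introduce $h_{\mathbf{y}}(\mathbf{x}):=\langle\mathbf{y},\boldsymbol{\Phi}(\mathbf{x})\rangle$, which by hypothesis is convex and continuously differentiable on $\mathcal{X}$ with gradient $\nabla\boldsymbol{\Phi}(\mathbf{x})^{T}\mathbf{y}$; the first-order characterization of convexity, $h_{\mathbf{y}}(\tilde{\mathbf{x}})\ge h_{\mathbf{y}}(\mathbf{x})+(\tilde{\mathbf{x}}-\mathbf{x})^{T}\nabla\boldsymbol{\Phi}(\mathbf{x})^{T}\mathbf{y}$, is exactly the nonnegativity of the first bracketed group. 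Applying the same inequality to $h_{\tilde{\mathbf{y}}}$ with $\mathbf{x}$ and $\tilde{\mathbf{x}}$ interchanged gives the nonnegativity of the second group, and summing the two proves (\ref{old20}).

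I do not anticipate a genuine obstacle: the claim is the standard fact that the saddle operator $(\nabla_{\mathbf{x}}\Psi,-\nabla_{\mathbf{y}}\Psi)$ of the convex-concave function $\Psi(\mathbf{x},\mathbf{y})=\langle\mathbf{y},\boldsymbol{\Phi}(\mathbf{x})\rangle$ is monotone. The only step calling for care is the bookkeeping of the mixed primal-dual terms in the expansion: it is precisely the linearity of $\langle\mathbf{y},\boldsymbol{\Phi}(\mathbf{x})\rangle$ in $\mathbf{y}$ that lets the terms $(\mathbf{y}-\tilde{\mathbf{y}})^{T}\boldsymbol{\Phi}(\cdot)$ be redistributed so that each half pairs with a single dual vector and matches a gradient-inequality residual; without that structure the two convexity estimates would not combine into a sign-definite sum. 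An alternative would be to cite the general monotonicity of such saddle operators, but the self-contained computation above fits the scope of this letter.
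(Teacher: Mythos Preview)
Your proposal is correct and follows essentially the same route as the paper: expand $(\mathbf{w}-\tilde{\mathbf{w}})^{T}[\boldsymbol{\Gamma}(\mathbf{w})-\boldsymbol{\Gamma}(\tilde{\mathbf{w}})]$ in block form, regroup into the two first-order convexity residuals (which the paper denotes $\Theta(\tilde{\mathbf{x}},\mathbf{y}\mid\mathbf{x})$ and $\Theta(\mathbf{x},\tilde{\mathbf{y}}\mid\tilde{\mathbf{x}})$), and conclude nonnegativity from the convexity of $\langle\mathbf{y},\boldsymbol{\Phi}(\cdot)\rangle$. Your explicit introduction of $h_{\mathbf{y}}$ and the remark on why linearity in $\mathbf{y}$ makes the regrouping work are helpful clarifications, but the argument is the same.
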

\begin{proof}
Since $\langle \mathbf{w},\boldsymbol{\Phi}(\mathbf{v})\rangle$ is convex and continuously differentiable on $\mathcal{V}$, for any $\mathbf{v}\in \mathcal{V}$ and $\mathbf{w}\in \mathcal{W}$, we have 
\begin{equation}
\Theta(\mathbf{v},\mathbf{w}\mid \tilde{\mathbf{v}})=\mathbf{w}^{T}[\boldsymbol{\Phi}(\mathbf{v})-\boldsymbol{\Phi}(\tilde{\mathbf{v}}) - \nabla \boldsymbol{\Phi}(\tilde{\mathbf{v}})(\mathbf{v}-\tilde{\mathbf{v}})]\ge 0.\label{old22}
\end{equation}
Referring to (\ref{old22}), we obtain
\begin{equation}
\begin{aligned}
(\boldsymbol{\eta}-&\tilde{\boldsymbol{\eta}})^{T}[\boldsymbol{\Gamma}(\boldsymbol{\eta})-\boldsymbol{\Gamma}(\tilde{\boldsymbol{\eta}})]\\
=&\left( \begin{array}{c} \mathbf{v} -\tilde{\mathbf{v}}\\ \mathbf{w}-\tilde{\mathbf{w}} \end{array} \right)^{T}\left( \begin{array}{c} \nabla \boldsymbol{\Phi}(\mathbf{v})^{T}\mathbf{w} -\nabla \boldsymbol{\Phi}(\tilde{\mathbf{v}})^{T}\tilde{\mathbf{w}}\\
      -\boldsymbol{\Phi}(\mathbf{v}) +\boldsymbol{\Phi}(\tilde{\mathbf{v}})\end{array} \right)\\
 =&\ \mathbf{w}^{T}[\boldsymbol{\Phi}(\tilde{\mathbf{v}})-\boldsymbol{\Phi}(\mathbf{v})-\nabla \boldsymbol{\Phi}(\mathbf{v})(\tilde{\mathbf{v}}-\mathbf{v} ) ]\\
 &\ \ +\tilde{\mathbf{w}}^{T} [\boldsymbol{\Phi}(\mathbf{v}) -\boldsymbol{\Phi}(\tilde{\mathbf{v}})-\nabla \boldsymbol{\Phi}(\tilde{\mathbf{v}})(\mathbf{v} -\tilde{\mathbf{v}})]\\
 =&\ \Theta(\tilde{\mathbf{v}},\mathbf{w}\mid \mathbf{v})+\Theta(\mathbf{v},\tilde{\mathbf{w}}\mid \tilde{\mathbf{v}})\ge0.
\end{aligned}
\end{equation}
Thus, this lemma holds. 
\end{proof}

\section{Generalized Primal-Dual Correction Method}\label{section3}
In this section, we first introduce the classic Arrow-Hurwicz method. Following that, we present the GPD-CM. Lastly, we discuss the selection of the regularization parameter based on the convergence conditions.
\subsection{Arrow-Hurwicz Method }
For the $k$-th iteration, the Arrow-Hurwicz method introduced in \cite{R16} utilizes $({\mathbf{v}}^{k},{\mathbf{w}}^{k})$ to generate a pair of $(\tilde{\mathbf{v}}^{k},\tilde{\mathbf{w}}^{k})$ by solving the following problems:
\begin{empheq}[left=\empheqlbrace]{alignat=2}
&\tilde{\mathbf{v}}^{k}=\arg\min\left\{\mathcal{L}(\mathbf{v},\mathbf{w}^{k})+\frac{r_{k}}{2}\norm{\mathbf{v}-\mathbf{v}^{k}}^{2}\mid \mathbf{v}\in \mathcal{V}\right\},\nonumber\\
&\tilde{\mathbf{w}}^{k}=\arg\max\left\{\mathcal{L}(\tilde{\mathbf{v}}^{k},\mathbf{w})-\frac{s_{k}}{2}\Vert\mathbf{w}-\mathbf{w}^{k}\Vert^{2}\mid \mathbf{w}\in \mathcal{W}\right\}.\nonumber
\end{empheq}
Since there is no correction step, we set $({\mathbf{v}}^{k+1},{\mathbf{w}}^{k+1})=(\tilde{\mathbf{v}}^{k},\tilde{\mathbf{w}}^{k})$. The above problems can be formulated as two variational inequalities, i.e., 
\begin{empheq}[left=\empheqlbrace]{alignat=4}
   &  f(\mathbf{v})-f(\tilde{\mathbf{v}}^{k})+(\mathbf{v}-\tilde{\mathbf{v}}^{k})^{T}[\nabla \boldsymbol{\Phi}(\tilde{\mathbf{v}}^{k})^{T}\mathbf{w}^{k}\nonumber\\
      &\qquad\qquad\qquad\qquad+r_{k}(\tilde{\mathbf{v}}^{k}-\mathbf{v}^{k})]\ge0, \quad \forall \ \mathbf{v}\in \mathcal{V},\nonumber\\ 
& g(\mathbf{w})-g(\tilde{\mathbf{w}}^{k}) +(\mathbf{w}-\tilde{\mathbf{w}}^{k})^{T}[-\boldsymbol{\Phi}(\tilde{\mathbf{v}}^{k})\nonumber\\
  &\qquad\qquad\qquad\qquad+s_{k}(\tilde{\mathbf{w}}^{k}-\mathbf{w}^{k})]\ge0, \quad \forall \ \mathbf{w}\in \mathcal{W}.\nonumber
\end{empheq}
Further, for any $ \boldsymbol{\eta}\in \mathcal{Z}$,  the above variational inequality can be written as 
\begin{equation}
  \vartheta(\boldsymbol{\eta})-\vartheta( \tilde{\boldsymbol{\eta}}^{k})+(\boldsymbol{\eta}- \tilde{\boldsymbol{\eta}}^{k})^{T}\left\{\boldsymbol{\Gamma}( \tilde{\boldsymbol{\eta}}^{k})+{\mathbf{H}}_{k}(\tilde{\boldsymbol{\eta}}^{k}-\boldsymbol{\eta}^{k})\right\}\ge0,\nonumber
\end{equation}
where the proximal matrix $\mathbf{H}_{k}$ exhibits an upper triangular structure, as shown below:
\begin{equation} {\mathbf{H}}_{k}=\left(
\begin{array}{cc}
r_{k}\mathbf{I}_{n} & -\nabla \boldsymbol{\Phi}(\tilde{\mathbf{v}}^{k})^{T} \\[0.1cm]
\boldsymbol{0}&s_{k}\mathbf{I}_{m} 
\end{array}
\right).\nonumber
\end{equation}
The Arrow-Hurwicz method with a constant step size may diverge for some bilinear SPPs, e.g., $\min_{x_{1}\ge 0, x_{2}\ge 0}\max_{y}\{x_{1}+x_{2}-y(x_{1}+x_{2})-(-y)\}$, which explains why its convergence often requires additional assumptions \cite{R16}.

\subsection{Generalized Primal-Dual Correction Method}
The GPD-CM consists of two main steps: the prediction step and the correction step. In the prediction step, we adopt an iterative scheme of the primal-dual method to update predictive variables. Then the proximal matrix becomes a predictive matrix
\begin{equation} \mathbf{Q}_{k}=\left(
\begin{array}{cc}
r_{k}\mathbf{I}_{n} & -\nabla \boldsymbol{\Phi}(\tilde{\mathbf{v}}^{k})^{T} \\[0.1cm]
-\alpha\nabla \boldsymbol{\Phi}(\tilde{\mathbf{v}}^{k})&s_{k}\mathbf{I}_{m} 
\end{array}
\right),\nonumber
\end{equation}
where the generalized parameter $\alpha$ is $[0,1]$. To achieve this, the optimization problems become
\begin{equation}
\tilde{\mathbf{v}}^{k}=\arg\min\left\{\mathcal{L}(\mathbf{v},\mathbf{w}^{k})+\frac{r_{k}}{2}\norm{\mathbf{v}-\mathbf{v}^{k}}^{2}\mid \mathbf{v}\in \mathcal{V}\right\},\label{e18}
\end{equation}
\begin{equation}
\begin{aligned}
\tilde{\mathbf{w}}^{k}=\arg\max\left\{\mathcal{L}(\tilde{\mathbf{v}}^{k},\mathbf{w})+\alpha\mathbf{w}^{T}\nabla \boldsymbol{\Phi}(\tilde{\mathbf{v}}^{k})(\tilde{\mathbf{v}}^{k}-\mathbf{v}^{k})\right.\\ \left.-\frac{s_{k}}{2}\Vert\mathbf{w}-\mathbf{w}^{k}\Vert^{2}\mid \mathbf{w}\in \mathcal{W}\right\}.
\end{aligned}\label{e19}
\end{equation}
By solving (\ref{e18}) and (\ref{e19}), we obtain the predictive variable $\tilde{\boldsymbol{\eta}}^{k}$ that follows the variational inequality:
\begin{equation}
  \vartheta(\boldsymbol{\eta})-\vartheta( \tilde{\boldsymbol{\eta}}^{k})+(\boldsymbol{\eta}- \tilde{\boldsymbol{\eta}}^{k})^{T}\left\{\boldsymbol{\Gamma}( \tilde{\boldsymbol{\eta}}^{k})+{\mathbf{Q}}_{k}(\tilde{\boldsymbol{\eta}}^{k}-\boldsymbol{\eta}^{k})\right\}\ge0.
\label{uii13}\end{equation}
In the correction step, we use a corrective matrix $\mathbf{M}_{k}$ to correct the predictive variables by the following equation:
\begin{equation} \left(\begin{array}{c}
\mathbf{v}^{k+1}\\[0.1cm]
\mathbf{w}^{k+1}
\end{array}
\right)= \left(\begin{array}{c}
\mathbf{v}^{k}\\[0.1cm]
\mathbf{w}^{k}
\end{array}
\right)- \mathbf{M}_{k}\left(\begin{array}{c}
\mathbf{v}^{k}-\tilde{\mathbf{v}}^{k}\\[0.1cm]
\mathbf{w}^{k}-\tilde{\mathbf{w}}^{k}
\end{array}
\right),\label{old44}
\end{equation}
where the corrective matrix $\mathbf{M}_{k}$ is not unique, but it needs to satisfy the following lemma. When $\boldsymbol{\Phi}(\mathbf{v})$ is linear, the methods developed in \cite{R3} and \cite{R12} align with our proposed method by setting the customized parameter $\alpha$ to 1.

\begin{lemma}\label{lemma3333}
Let $\{\boldsymbol{\eta}^{k},\tilde{\boldsymbol{\eta}}^{k},\boldsymbol{\eta}^{k+1}\}$ be generated by the GPD-CM. For predictive matrix $\mathbf{Q}_{k}$, if we can find a corrective matrix $\mathbf{M}_{k}$ that meets the following convergence condition, i.e.,
\begin{equation}
\boldsymbol{\Sigma}_{k}=\mathbf{Q}_{k}\mathbf{M}_{k}^{-1}\succ0, 
\mathbf{G}_{k}=\mathbf{Q}_{k}^{T}+\mathbf{Q}_{k}-\mathbf{M}_{k}^{T}\boldsymbol{\Sigma}_{k}\mathbf{M}_{k}\succ0,\label{old45}
\end{equation}
 then we have 
\begin{align}
 \vartheta(\boldsymbol{\eta})&- \vartheta(\tilde{\boldsymbol{\eta}}^{k})+(\boldsymbol{\eta}-\tilde{\boldsymbol{\eta}}^{k})^{T}\boldsymbol{\Gamma}(\tilde{\boldsymbol{\eta}}^{k})\ge\frac{1}{2}\left(\norm{\boldsymbol{\eta}-\boldsymbol{\eta}^{k+1}}_{\boldsymbol{\Sigma}_{k}}^{2}\right.\nonumber\\&\left.-\norm{\boldsymbol{\eta}-\boldsymbol{\eta}^{k}}_{\boldsymbol{\Sigma}_{k}}^{2}\right) +\frac{1}{2}\norm{\tilde{\boldsymbol{\eta}}{}^{k}-\boldsymbol{\eta}^{k}}_{\mathbf{G}_{k}}^{2}, \quad\forall \ \boldsymbol{\eta}\in \mathcal{Z}. \label{old46}\end{align}
\end{lemma}
\begin{proof}
Referring to (\ref{old44}), we have
\begin{equation}\begin{aligned}
(\boldsymbol{\eta}-\tilde{\boldsymbol{\eta}}^{k})^{T}\mathbf{Q}_{k}(\boldsymbol{\eta}^{k}-\tilde{\boldsymbol{\eta}}^{k})&=(\boldsymbol{\eta}-\tilde{\boldsymbol{\eta}}^{k})^{T}\boldsymbol{\Sigma}_{k}\mathbf{M}_{k}(\boldsymbol{\eta}^{k}-\tilde{\boldsymbol{\eta}}^{k})\\&=(\boldsymbol{\eta}-\tilde{\boldsymbol{\eta}}^{k})^{T}\boldsymbol{\Sigma}_{k}(\boldsymbol{\eta}^{k}-\boldsymbol{\eta}^{k+1}).\nonumber
\end{aligned}\end{equation}
By using the following formula
\begin{equation}
\begin{aligned}
(\boldsymbol{a}-\boldsymbol{b})^{T}\boldsymbol{\Sigma}_{k}(\boldsymbol{c}-\mathbf{d})=&\frac{1}{2}\left( \Vert\boldsymbol{a}-\mathbf{d}\Vert_{\boldsymbol{\Sigma}_{k}}^{2}-\Vert\boldsymbol{a}-\boldsymbol{c}\Vert_{\boldsymbol{\Sigma}_{k}}^{2}\right)\\&+\frac{1}{2}\left( \Vert\boldsymbol{b}-\boldsymbol{c}\Vert_{\boldsymbol{\Sigma}_{k}}^{2}-\Vert\boldsymbol{b}-\mathbf{d}\Vert_{\boldsymbol{\Sigma}_{k}}^{2}\right),\nonumber
\end{aligned}\end{equation}
we obtain
\begin{equation}
\begin{aligned}
(\boldsymbol{\eta}&-\tilde{\boldsymbol{\eta}}^{k})^{T}\boldsymbol{\Sigma}_{k}(\boldsymbol{\eta}^{k}-\boldsymbol{\eta}^{k+1})\\&=\frac{1}{2}\left( \norm{\boldsymbol{\eta}-\boldsymbol{\eta}^{k+1}}_{\boldsymbol{\Sigma}_{k}}^{2}-\norm{\boldsymbol{\eta}-\boldsymbol{\eta}^{k}}_{\boldsymbol{\Sigma}_{k}}^{2}\right)\\
&\quad+\frac{1}{2}\left( \norm{\tilde{\boldsymbol{\eta}}{}^{k}-\boldsymbol{\eta}^{k}}_{\boldsymbol{\Sigma}_{k}}^{2}-\norm{\tilde{\boldsymbol{\eta}}{}^{k}-\boldsymbol{\eta}^{k+1}}_{\boldsymbol{\Sigma}_{k}}^{2}\right).\nonumber
\end{aligned}
\end{equation}
This leads us to the next step, where we further analyze
\begin{eqnarray}
\lefteqn{\norm{\tilde{\boldsymbol{\eta}}{}^{k}-\boldsymbol{\eta}^{k}}_{\boldsymbol{\Sigma}_{k}}^{2}-\norm{\tilde{\boldsymbol{\eta}}{}^{k}-\boldsymbol{\eta}^{k+1}}_{\boldsymbol{\Sigma}_{k}}^{2}}\nonumber\\
&=&\norm{\tilde{\boldsymbol{\eta}}{}^{k}-\boldsymbol{\eta}^{k}}_{\boldsymbol{\Sigma}_{k}}^{2}-\norm{(\tilde{\boldsymbol{\eta}}{}^{k}-\boldsymbol{\eta}^{k})-(\boldsymbol{\eta}^{k+1}-\boldsymbol{\eta}^{k})}_{\boldsymbol{\Sigma}_{k}}^{2}\nonumber\\
&=&\norm{\tilde{\boldsymbol{\eta}}{}^{k}-\boldsymbol{\eta}^{k}}_{\boldsymbol{\Sigma}_{k}}^{2}-\norm{(\tilde{\boldsymbol{\eta}}{}^{k}-\boldsymbol{\eta}^{k})-\mathbf{M}_{k}(\tilde{\boldsymbol{\eta}}{}^{k}-\boldsymbol{\eta}^{k})}_{\boldsymbol{\Sigma}_{k}}^{2}\nonumber\\
&=&(\tilde{\boldsymbol{\eta}}{}^{k}-\boldsymbol{\eta}^{k})^{T}(\mathbf{Q}_{k}^{T}+\mathbf{Q}_{k}-\mathbf{M}_{k}^{T}\boldsymbol{\Sigma}_{k}\mathbf{M}_{k})(\tilde{\boldsymbol{\eta}}{}^{k}-\boldsymbol{\eta}^{k})\nonumber\\
&=&\norm{\tilde{\boldsymbol{\eta}}{}^{k}-\boldsymbol{\eta}^{k}}_{\mathbf{G}_{k}}^{2}.\nonumber
\end{eqnarray}
Then we have
\begin{equation}
\begin{aligned}
(\boldsymbol{\eta}&-\tilde{\boldsymbol{\eta}}^{k})^{T}\mathbf{Q}_{k}(\boldsymbol{\eta}^{k}-\tilde{\boldsymbol{\eta}}^{k})=\frac{1}{2}\norm{\tilde{\boldsymbol{\eta}}{}^{k}-\boldsymbol{\eta}^{k}}_{\mathbf{G}_{k}}^{2}\\&+\frac{1}{2}\left( \norm{\boldsymbol{\eta}-\boldsymbol{\eta}^{k+1}}_{\boldsymbol{\Sigma}_{k}}^{2}-\norm{\boldsymbol{\eta}-\boldsymbol{\eta}^{k}}_{\boldsymbol{\Sigma}_{k}}^{2}\right).\label{eq4.6}
\end{aligned}
\end{equation}
Combining (\ref{uii13}) with (\ref{eq4.6}), this theorem is proved.
\end{proof}

\begin{theorem}\label{theorem1}
Let $\{\boldsymbol{\eta}^{k},\tilde{\boldsymbol{\eta}}^{k},\boldsymbol{\eta}^{k+1}\}$ be generated by the GPD-CM. For the predictive matrix $\mathbf{Q}_{k}$, if there is a corrective matrix $\mathbf{M}_{k}$ that satisfies the convergence condition (\ref{old45}), for any $\boldsymbol{\eta}^{*}\in \mathcal{Z}^{*}$, we obtain
\begin{equation}
\Vert\boldsymbol{\eta}^{*}-\boldsymbol{\eta}^{k+1}\Vert_{\boldsymbol{\Sigma}_{k}}^{2}\leq\Vert\boldsymbol{\eta}^{*}-\boldsymbol{\eta}^{k}\Vert_{\boldsymbol{\Sigma}_{k}}^{2}-\Vert\boldsymbol{\eta}^{k}-\tilde{\boldsymbol{\eta}}^{k}\Vert_{\mathbf{G}_{k}}^{2},  \label{old52}
\end{equation}
where $\mathcal{Z}^{*}$ is the set of optimal solutions.
\end{theorem}
\begin{proof} Setting $\boldsymbol{\eta}=\boldsymbol{\eta}^{*}$ in (\ref{old46}), we get 
\begin{equation}\begin{aligned}
&\norm{\boldsymbol{\eta}^{*}-\boldsymbol{\eta}^{k}}_{\boldsymbol{\Sigma}_{k}}^{2}-\norm{\boldsymbol{\eta}^{*}-\boldsymbol{\eta}^{k+1}}_{\boldsymbol{\Sigma}_{k}}^{2}-\Vert\boldsymbol{\eta}^{k}-\tilde{\boldsymbol{\eta}}^{k}\Vert_{\mathbf{G}_{k}}^{2}\\&\quad\ge 2 [\vartheta(\tilde{\boldsymbol{\eta}}^{k})- \vartheta(\boldsymbol{\eta}^{*})+(\tilde{\boldsymbol{\eta}}^{k}-\boldsymbol{\eta}^{*})^{T}\boldsymbol{\Gamma}(\tilde{\boldsymbol{\eta}}^{k})].\end{aligned}
\end{equation}
By Lemme \ref{lemma2}, the monotone operator satisfies $(\tilde{\boldsymbol{\eta}}^{k}-\boldsymbol{\eta}^{*})^{T}\boldsymbol{\Gamma}(\tilde{\boldsymbol{\eta}}^{k})\ge (\tilde{\boldsymbol{\eta}}^{k}-\boldsymbol{\eta}^{*})^{T}\boldsymbol{\Gamma}(\boldsymbol{\eta}^{*}).$ Then we have
\begin{equation}\begin{aligned}
&\Vert\boldsymbol{\eta}^{*}-\boldsymbol{\eta}^{k}\Vert_{\boldsymbol{\Sigma}_{k}}^{2}-\Vert\boldsymbol{\eta}^{*}-\boldsymbol{\eta}^{k+1}\Vert_{\boldsymbol{\Sigma}_{k}}^{2}-\Vert\boldsymbol{\eta}^{k}-\tilde{\boldsymbol{\eta}}^{k}\Vert_{\mathbf{G}_{k}}^{2}\\&\quad\ge  2[ \vartheta(\tilde{\boldsymbol{\eta}}^{k})- \vartheta(\boldsymbol{\eta}^{*})+(\tilde{\boldsymbol{\eta}}^{k}-\boldsymbol{\eta}^{*})^{T}\boldsymbol{\Gamma}(\boldsymbol{\eta}^{*})]\ge0.\end{aligned}
\end{equation}
Further, we obtain
\begin{equation}
\Vert\boldsymbol{\eta}^{*}-\boldsymbol{\eta}^{k}\Vert_{\boldsymbol{\Sigma}_{k}}^{2}-\Vert\boldsymbol{\eta}^{*}-\boldsymbol{\eta}^{k+1}\Vert_{\boldsymbol{\Sigma}_{k}}^{2}-\Vert\boldsymbol{\eta}^{k}-\tilde{\boldsymbol{\eta}}^{k}\Vert_{\mathbf{G}_{k}}^{2}\ge0.
\end{equation}
Thus, this theorem holds.
\end{proof}

\begin{comment}
\begin{figure*}[t]
\begin{subequations}
\begin{align} 
\boldsymbol{\Sigma}_{k}=\mathbf{Q}_{k}\mathbf{M}_{k}^{-1}=&\left(\begin{array}{cc}
r_{k}\mathbf{I}_{n} & -\alpha\nabla \boldsymbol{\Phi}(\tilde{\mathbf{v}}^{k})^{T} \\[0.1cm]
-\alpha\nabla \boldsymbol{\Phi}(\tilde{\mathbf{v}}^{k})&\frac{\alpha(\alpha-1)}{r_{k}}\nabla \boldsymbol{\Phi}(\tilde{\mathbf{v}}^{k})\nabla \boldsymbol{\Phi}(\tilde{\mathbf{v}}^{k})^{T}+s_{k}\mathbf{I}_{m} 
\end{array}\right),\label{eq23a}\\ 
\mathbf{G}_{k}=\mathbf{Q}_{k}^{T}+\mathbf{Q}_{k}-\mathbf{M}_{k}^{T}\boldsymbol{\Sigma}_{k}\mathbf{M}_{k}=&\left(\begin{array}{cc}
r_{k}\mathbf{I}_{n} & -\alpha\nabla \boldsymbol{\boldsymbol{\Phi}}(\tilde{\mathbf{v}}^{k})^{T} \\[0.1cm] -\alpha\nabla \boldsymbol{\boldsymbol{\Phi}}(\tilde{\mathbf{v}}^{k})&\ \  \frac{(\alpha-1)}{r_{k}}\nabla \boldsymbol{\boldsymbol{\Phi}}(\tilde{\mathbf{v}}^{k})\nabla \boldsymbol{\boldsymbol{\Phi}}(\tilde{\mathbf{v}}^{k})^{T}+s_{k}\mathbf{I}_{m} 
\end{array}\right).\label{eq23b}\end{align}
\end{subequations}
\hrulefill
\vspace{4pt}
\end{figure*}
\end{comment}

\begin{figure*}[!h]
\begin{subequations}
\begin{align} 
&\boldsymbol{\Sigma}_{k}=\mathbf{Q}_{k}\mathbf{M}_{k}^{-1}=\left(\begin{array}{cc}
r_{k}\mathbf{I}_{n} & -\alpha\nabla \boldsymbol{\Phi}(\tilde{\mathbf{v}}^{k})^{T} \\[0.1cm]
-\alpha\nabla \boldsymbol{\Phi}(\tilde{\mathbf{v}}^{k})&\frac{\alpha(\alpha-1)}{r_{k}}\nabla \boldsymbol{\Phi}(\tilde{\mathbf{v}}^{k})\nabla \boldsymbol{\Phi}(\tilde{\mathbf{v}}^{k})^{T}+s_{k}\mathbf{I}_{m} 
\end{array}\right),\label{eq23a}\\
&\mathbf{G}_{k}=\mathbf{Q}_{k}^{T}+\mathbf{Q}_{k}-\mathbf{M}_{k}^{T}\boldsymbol{\Sigma}_{k}\mathbf{M}_{k}=\left(\begin{array}{cc}
r_{k}\mathbf{I}_{n} & -\alpha\nabla \boldsymbol{\Phi}(\tilde{\mathbf{v}}^{k})^{T} \\[0.1cm]
-\alpha\nabla \boldsymbol{\Phi}(\tilde{\mathbf{v}}^{k})&\frac{\alpha-1}{r_{k}}\nabla \boldsymbol{\Phi}(\tilde{\mathbf{v}}^{k})\nabla \boldsymbol{\Phi}(\tilde{\mathbf{v}}^{k})^{T}+s_{k}\mathbf{I}_{m} 
\end{array}\right).\label{eq23b}\end{align}
\end{subequations}
\hrulefill
\vspace{4pt}
\end{figure*}

\begin{remark}
In Lemma \ref{lemma3333}, the convergence condition is easily satisfied. In this paper, we select the corrective matrix as
\begin{equation}
\mathbf{M}_{k} = \left(
\begin{array}{cc}
\mathbf{I}_{n} & \frac{\alpha-1}{r_{k}} \nabla \boldsymbol{\Phi}(\tilde{\mathbf{v}}^{k})^{T} \\[0.1cm]
\boldsymbol{0} & \mathbf{I}_{m} 
\end{array}
\right).\nonumber
\end{equation}
This allows us to derive the matrices $\boldsymbol{\Sigma}_{k}$ and $\mathbf{G}_{k}$, as shown at the top of the next page. In each iteration, we adjust the value of $r_{k} \cdot s_{k}$ to ensure both matrices are positive definite.
\end{remark}
By the Schur complement lemma and the singular decomposition method, the matrix $\boldsymbol{\Sigma}_{k}$ in (\ref{eq23a}) is positive definite under the following condition:
\begin{equation} 
r_{k}s_{k}>\alpha\Vert\nabla \boldsymbol{\Phi}(\tilde{\mathbf{v}}^{k})\nabla \boldsymbol{\Phi}(\tilde{\mathbf{v}}^{k})^{T}\Vert.\label{u3u}
\end{equation}
In the same way, the matrix $\mathbf{G}_{k}$ in (\ref{eq23b}) is positive definite under the following condition:
\begin{equation} r_{k}s_{k}>[\alpha^{2}-\alpha+1]\Vert\nabla \boldsymbol{\Phi}(\tilde{\mathbf{v}}^{k})\nabla \boldsymbol{\Phi}(\tilde{\mathbf{v}}^{k})^{T}\Vert.\label{old25}
\end{equation}
For the detailed proof, please refer to Appendix A.
 It is obvious that $\boldsymbol{\Sigma}_{k}\succ0$ and $\mathbf{G}_{k}\succ0$ when condition (\ref{old25}) holds. The minimum theoretical lower bound of $r_{k}s_{k}$ is obtained at $\alpha=1/2$.
Since the predictive and corrective matrices vary with the sequence, for the $k$-th iteration, we adjust the values of $(r_{k},s_{k})$ using the following formulae:
 \begin{align}
r_{k}&=\frac{1}{\mu}\Vert\nabla \boldsymbol{\Phi}(\mathbf{v}^{k})\Vert,\label{old26}\\
s_{k}&=\frac{\alpha^{2}-\alpha+1}{\sigma r_{k}}\Vert\nabla \boldsymbol{\Phi}(\tilde{\mathbf{v}}^{k})\nabla \boldsymbol{\Phi}(\tilde{\mathbf{v}}^{k})^{T}\Vert,\label{old27}
\end{align}
where factor parameters $\mu>0$ and $\sigma\in(0, 1)$ are used to balance the objective and the proximal term. By (\ref{old26}) and (\ref{old27}), condition (\ref{old25}) holds.

\begin{lemma}
The convex solution set of the variational inequality (\ref{e7}) can be characterized as
\begin{equation}
\mathcal{Z}^{*}=\underset{\boldsymbol{\eta} \in \mathcal{Z}}{\bigcap}\{\tilde{\boldsymbol{\eta} }\in \mathcal{Z}: \vartheta(\boldsymbol{\eta} )-\vartheta(\tilde{\boldsymbol{\eta} })+(\boldsymbol{\eta} -\tilde{\boldsymbol{\eta} })^{T}\boldsymbol{\Gamma}(\boldsymbol{\eta} )\ge0\}.\label{eq3.14}
\end{equation}\label{lemma4}
\end{lemma}\vspace{-1cm}
\begin{proof} The proof can be found in \cite{Hp2} (Theorem 2.1). \end{proof}
By Lemma \ref{lemma2}, for any $ \boldsymbol{\eta} , \tilde{\boldsymbol{\eta} }\in \mathcal{Z}$, the monotone operator can be written as
\begin{equation}
(\boldsymbol{\eta} -\tilde{\boldsymbol{\eta} })^{T}\boldsymbol{\Gamma}(\boldsymbol{\eta} )\ge(\boldsymbol{\eta} -\tilde{\boldsymbol{\eta} })^{T}\boldsymbol{\Gamma}(\tilde{\boldsymbol{\eta} }).\label{eq3.15}
\end{equation}
Setting $\tilde{\boldsymbol{\eta} }=\boldsymbol{\eta} ^{*}$ in (\ref{eq3.15}), variational inequality (\ref{e7}) can be rewritten as the following equivalent form:
\begin{equation}
\boldsymbol{\eta} ^{*}\in \mathcal{Z}, \ \vartheta(\boldsymbol{\eta})- \vartheta(\boldsymbol{\eta}^{*})+(\boldsymbol{\eta} -\boldsymbol{\eta} ^{*})^{T}\boldsymbol{\Gamma}(\boldsymbol{\eta} )\ge0, \ \forall \ \boldsymbol{\eta} \in \mathcal{Z}. \label{eq3.16}
\end{equation}
Referring to the optimal condition (\ref{eq3.16}), for the given $\epsilon >0$, an approximate solution of variational inequality (\ref{e7}) satisfies 
\begin{equation}
 \tilde{\boldsymbol{\eta} }\in \mathcal{Z}, \vartheta(\boldsymbol{\eta})-\vartheta(\tilde{\boldsymbol{\eta}})+(\boldsymbol{\eta} -\tilde{\boldsymbol{\eta} })^{T}\boldsymbol{\Gamma}(\boldsymbol{\eta} )\ge-\epsilon, \ \forall \ \boldsymbol{\eta} \in \mathsf{N}_{\epsilon}(\tilde{\boldsymbol{\eta} }),\nonumber
\end{equation}
where $\mathsf{N}_{\epsilon}(\tilde{\boldsymbol{\eta} })=\{\boldsymbol{\eta} \mid \Vert\boldsymbol{\eta} -\tilde{\boldsymbol{\eta} }\Vert\leq \epsilon\}$ is the unit neighborhood of $\tilde{\boldsymbol{\eta} }$. For the given $\epsilon>0$, an iterative scheme can provide an approximate solution $\tilde{\boldsymbol{\eta} }\in \mathcal{Z}$, such that
\begin{equation}
\sup_{\boldsymbol{\eta} \in\mathsf{N}_{\epsilon}(\tilde{\boldsymbol{\eta} })} \{\vartheta(\tilde{\boldsymbol{\eta}})-\vartheta(\boldsymbol{\eta})+(\tilde{\boldsymbol{\eta} }-\boldsymbol{\eta} )^{T}\boldsymbol{\Gamma}(\boldsymbol{\eta} )\}\leq\epsilon. \label{eq3.17}
\end{equation}
Referring to the monotone operator $\boldsymbol{\Gamma}(\boldsymbol{\eta} )$, for $\forall \ \boldsymbol{\eta} \in \mathcal{Z}$, formula (\ref{old46}) can be expressed as
\begin{equation}
\begin{aligned}
\vartheta(\boldsymbol{\eta})&-\vartheta(\tilde{\boldsymbol{\eta}}{}^{k})+(\boldsymbol{\eta} -\tilde{\boldsymbol{\eta} }{}^{k})\boldsymbol{\Gamma}(\boldsymbol{\eta} )\\&+\frac{1}{2}\left(\Vert \boldsymbol{\eta} -\boldsymbol{\eta} ^{k} \Vert_{\boldsymbol{\Sigma}_{k}}^{2}-\Vert \boldsymbol{\eta} -\boldsymbol{\eta} ^{k+1} \Vert_{\boldsymbol{\Sigma}_{k}}^{2}\right)\ge0. \end{aligned}\label{eq3.18}
\end{equation}

\begin{assumption}
Let $\Omega\subset \mathbb{R}^{n}$ be a bounded convex set. The operator  $\nabla \Phi (\mathbf{x}):\mathbb{R}^{n}\to \mathbb{R}^{{m\times n}}$ is $L$-bounded and $L$-Lipschitz on $\Omega$, i.e., for all $\mathbf{x},\mathbf{y}\in \Omega$ it holds that 
\begin{align}
&\Vert \nabla \Phi (\mathbf{x})\Vert^{2}\leq L\norm{\mathbf{x}}^{2}\leq L C_{1}, && \nonumber\\
&\Vert \nabla \Phi (\mathbf{x})-\nabla \Phi (\mathbf{y})\Vert^{2}\leq L\norm{\mathbf{x}-\mathbf{y}}^{2}\leq LC_{2}, &&\nonumber
\end{align}
 where $L\in (0, \infty)$ is a positive constant, $C_{1}=\max_{\mathbf{x}\in \Omega} \norm{\mathbf{x}}^{2}$ and $C_{2}=\max_{\mathbf{x}-\mathbf{y}\in \Omega}\norm{\mathbf{x}-\mathbf{y}}^{2}$. 
\end{assumption}

\begin{lemma}
Suppose that the coupling function $\Phi$ satisfies Assumption 1 for some bounded convex set $\Omega\subset \mathbb{R}^{n}$, and that the iterates $\{\tilde{\mathbf{v}}^{k}\}_{k\in\mathbb{N}}$ generated by GPD-CM satisfy $\tilde{\mathbf{v}}^{k}\in \Omega$. Then, for any integer $t>0$, there exists a set of regularization factors $\{r_{k},s_{k}\}_{k=1}^{t}$ that satisfies $\{\boldsymbol{\Sigma}_{k-1}-\boldsymbol{\Sigma}_{k}\succ0, k=1,\cdots,t\}$, where $\boldsymbol{\Sigma}_{k}$ is defined in Lemma 3.
\label{lemma55}
\end{lemma}
\begin{proof}
The difference matrix $\boldsymbol{\Sigma}_{k-1}-\boldsymbol{\Sigma}_{k}$ is written as
\begin{equation}\begin{aligned}\left(
\begin{array}{cc}
(r_{k-1}-r_{k})\mathbf{I}_{n} & (\nabla \Phi (\tilde{\mathbf{v}}^{k})-\nabla \Phi (\tilde{\mathbf{v}}^{k-1}))^{T} \\[0.2cm]
\nabla \Phi (\tilde{\mathbf{v}}^{k})-\nabla \Phi (\tilde{\mathbf{v}}^{k-1})&(s_{k-1}-s_{k})\mathbf{I}_{m}
\end{array}
\right).\nonumber\end{aligned}
\end{equation}
Using a similar approach as deriving condition (\ref{u3u}), we can deduce the condition for the above symmetrical matrix to be positive semidefinite, which is
\begin{subequations}
\begin{empheq}[left={}\empheqlbrace]{alignat=3}
&0<r_{k}< r_{k-1}, 0<s_{k}< s_{k-1},\nonumber\\
&r_{k}s_{k}< r_{k-1}s_{k-1}< LC_{1},\nonumber\\
&(r_{k-1}-r_{k})(s_{k-1}-s_{k})> \Vert \nabla \Phi (\tilde{\mathbf{v}}^{k})-\nabla \Phi (\tilde{\mathbf{v}}^{k-1})\Vert^{2}.\nonumber
\end{empheq}
\end{subequations}
In fact, we need to design a monotonically decreasing sequence that fulfills the above condition. Since the values of $\{r_{k},s_{k}\}$ can be chosen arbitrarily from the interval $(0,\infty)$, such a sequence exists. Let $L$, $C_{1}$, and $C_{2}$ be defined as in Assumption 1. For example, we take
\begin{equation}
r_{t}=s_{t}=LC_{1}+\gamma,\nonumber
\end{equation}
where $\gamma>0$ and let $r_{t-1}=s_{t-1}$. Then we have
\begin{equation}
(r_{t-1}-r_{t})^{2}= LC_{2}+\gamma, \quad r_{t-1}=r_{t}+\sqrt{LC_{2}+\gamma}.\nonumber
\end{equation}
Then the sequence can be expressed by
\begin{equation}
r_{k}=(t-k)\sqrt{LC_{2}+\gamma}+LC_{1}+\gamma, k=0,1,\cdots,t.\nonumber
\end{equation}
Thus, this lemma is proved.
\end{proof}

\begin{theorem}
 Let $\{\tilde{\mathbf{v}}^{k}\}_{k\in\mathbb{N}}$ and $\{\tilde{\mathbf{w}}^{k}\}_{k\in \mathbb{N}}$ be two sequences generated by GPD-CM for solving the problem (1), both of which satisfy Assumption 1. Then we have $\tilde{\mathbf{v}}^{k},\tilde{\mathbf{w}}^{k}\in \Omega$. Define $\tilde{\mathbf{v}}_{t}$ and $\tilde{\boldsymbol{\eta} }_{t}$ for any positive integer $t$ as follows:
\begin{equation}
\tilde{\mathbf{v}}_{t}=\frac{1}{1+t}\sum_{k=0}^{t}\tilde{\mathbf{v}}^{k},\quad \tilde{\boldsymbol{\eta} }_{t}=\frac{1}{1+t}\sum_{k=0}^{t}\tilde{\boldsymbol{\eta} }^{k}.\nonumber
\end{equation}
For $\tilde{\boldsymbol{\eta} }_{t}\in \mathcal{Z}$ and $\forall \ \boldsymbol{\eta} \in \mathcal{Z}$, the GPD-CM converges ergodically with an $O(1/t)$ rate, 
\begin{equation}
 \vartheta(\tilde{\boldsymbol{\eta}}_{t})-\vartheta(\boldsymbol{\eta})+(\tilde{\boldsymbol{\eta} }_{t}-\boldsymbol{\eta} )^{T}\boldsymbol{\Gamma}(\boldsymbol{\eta} )\leq \frac{1}{2(1+t)}\Vert\boldsymbol{\eta} -\boldsymbol{\eta} ^{0}\Vert_{\boldsymbol{\Sigma}_{0}}^{2}. \nonumber
\end{equation}
\end{theorem}

\begin{proof}
For all $k=0, 1,\cdots,t$, $\tilde{\boldsymbol{\eta} }^{k}$ belongs to the convex set $\mathcal{Z}$. Then the linear combination $\tilde{\boldsymbol{\eta} }_{t}\in \mathcal{Z}$ holds.  Summing inequality (\ref{eq3.18}) over iterations $k=0,1,\cdots,t$, for any $ \boldsymbol{\eta} \in \mathcal{Z}$, the following inequality holds: 
\begin{equation}\begin{aligned}
(1+t)\vartheta(\boldsymbol{\eta})&-\sum_{k=0}^{t}\vartheta(\tilde{\boldsymbol{\eta}}{}^{k})+\left((1+t)\boldsymbol{\eta} -\sum_{k=0}^{t}\tilde{\boldsymbol{\eta} }{}^{k}\right)\boldsymbol{\Gamma}(\boldsymbol{\eta} )\\&+\frac{1}{2}\sum_{k=0}^{t}\left(\Vert \boldsymbol{\eta} -\boldsymbol{\eta} ^{k} \Vert_{\boldsymbol{\Sigma}_{k}}^{2}-\Vert \boldsymbol{\eta} -\boldsymbol{\eta} ^{k+1} \Vert_{\boldsymbol{\Sigma}_{k}}^{2}\right)\ge 0.\nonumber
\end{aligned}\end{equation}
The inequality above divided by $1+t$ equals
\begin{equation}
\begin{aligned}
\vartheta(\boldsymbol{\eta})&-\frac{1}{1+t}\sum_{k=0}^{t}\vartheta(\tilde{\boldsymbol{\eta}}{}^{k})+\left(\boldsymbol{\eta} -\frac{1}{1+t}\sum_{k=0}^{t}\tilde{\boldsymbol{\eta} }{}^{k}\right)\boldsymbol{\Gamma}(\boldsymbol{\eta} )\\&+\frac{1}{2 (1+t)}\sum_{k=0}^{t}\left(\Vert \boldsymbol{\eta} -\boldsymbol{\eta} ^{k} \Vert_{\boldsymbol{\Sigma}_{k}}^{2}-\Vert \boldsymbol{\eta} -\boldsymbol{\eta} ^{k+1} \Vert_{\boldsymbol{\Sigma}_{k}}^{2}\right)\ge 0.
\end{aligned}\nonumber
\end{equation}
We know that $\vartheta(\boldsymbol{\eta})$ is convex in $\mathcal{Z}$ and $\tilde{\boldsymbol{\eta}}_{t}=\frac{1}{1+t}\sum_{k=0}^{t}\tilde{\boldsymbol{\eta}}^{k}\in \mathcal{Z}$. So  $\vartheta(\tilde{\boldsymbol{\eta}}_{t})\leq\frac{1}{1+t}\sum_{k=0}^{t}\vartheta(\tilde{\boldsymbol{\eta}}^{k})$ holds. Then we have
\begin{equation}
\begin{aligned}
 &\vartheta(\boldsymbol{\eta})-\vartheta(\tilde{\boldsymbol{\eta}}_{t})+\left(\boldsymbol{\eta} -\tilde{\boldsymbol{\eta} }_{t}\right)\boldsymbol{\Gamma}(\boldsymbol{\eta} )\\&+\frac{1}{2 (1+t)}\sum_{k=0}^{t}\left(\Vert \boldsymbol{\eta} -\boldsymbol{\eta} ^{k} \Vert_{\boldsymbol{\Sigma}_{k}}^{2}-\Vert \boldsymbol{\eta} -\boldsymbol{\eta} ^{k+1} \Vert_{\boldsymbol{\Sigma}_{k}}^{2}\right)\ge 0, 
\\ &\vartheta(\boldsymbol{\eta})-\vartheta(\tilde{\boldsymbol{\eta}}_{t})+\left(\boldsymbol{\eta} -\tilde{\boldsymbol{\eta} }_{t}\right)\boldsymbol{\Gamma}(\boldsymbol{\eta} )\\&+\frac{1}{2 (1+t)}\left[\Vert \boldsymbol{\eta} -\boldsymbol{\eta} ^{0} \Vert_{\boldsymbol{\Sigma}_{0}}^{2}-\sum_{k=1}^{t}(\boldsymbol{\eta} -\boldsymbol{\eta} ^{k})^{T}\mathbf{D}_{k}(\boldsymbol{\eta} -\boldsymbol{\eta} ^{k})\right]\ge 0, \\
\end{aligned}\nonumber
\end{equation}
where $\mathbf{D}_{k}=\boldsymbol{\Sigma}_{k-1}-\boldsymbol{\Sigma}_{k}$. By Lemma \ref{lemma55}, we take $\mathbf{D}_{k}\succ0$. For any $\boldsymbol{\eta}\ne \boldsymbol{\eta}^{k}$, $\Vert\boldsymbol{\eta} -\boldsymbol{\eta} ^{k}\Vert^{2}_{\mathbf{D}_{k}}>0$ holds. We get
\begin{equation}
\begin{aligned}
&\vartheta(\boldsymbol{\eta})-\vartheta(\tilde{\boldsymbol{\eta}}_{t})+\left(\boldsymbol{\eta} -\tilde{\boldsymbol{\eta} }_{t}\right)\boldsymbol{\Gamma}(\boldsymbol{\eta} )+\frac{1}{2 (1+t)}\Vert \boldsymbol{\eta} -\boldsymbol{\eta} ^{0} \Vert_{\boldsymbol{\Sigma}_{0}}^{2}\\&\qquad\qquad\qquad\qquad\qquad\qquad\ge \frac{1}{2 (1+t)}\sum_{k=1}^{t}\Vert\boldsymbol{\eta} -\boldsymbol{\eta} ^{k}\Vert^{2}_{\mathbf{D}_{k}}, \\
&\vartheta(\boldsymbol{\eta})-\vartheta(\tilde{\boldsymbol{\eta}}_{t})+\left(\boldsymbol{\eta} -\tilde{\boldsymbol{\eta} }_{t}\right)\boldsymbol{\Gamma}(\boldsymbol{\eta} )+\frac{1}{2 (1+t)}\Vert \boldsymbol{\eta} -\boldsymbol{\eta} ^{0} \Vert_{\boldsymbol{\Sigma}_{0}}^{2}\ge 0.\\
\end{aligned}\nonumber
\end{equation}
Thus, this theorem holds.
\end{proof}
The pseudo-code of the GPD-CM is provided in Algorithm 1.

\begin{algorithm}[t]\label{algorithm1}
\caption{GPD-CM }
\LinesNumbered
\KwIn{ Parameters $\mu>1, \sigma\in (0,1), \alpha\in[0,1], \tau>0, ite \ge 1$.}
\KwOut{The optimal solution: $\mathbf{v}^{opt}$.}
Initialize $\mathbf{v}^{0},\mathbf{w}^{0},k=0$\;
\While{$err\ge\tau$ \& $k \le ite$}{
\emph{\% The prediction step:}\\
$r_{k}=\frac{1}{\mu}\Vert\nabla \boldsymbol{\Phi}(\mathbf{v}^{k})\Vert$\;
$\tilde{\mathbf{v}}^{k}=\arg\min\{\mathcal{L}(\mathbf{v},\mathbf{w}^{k})+\frac{r_{k}}{2}\norm{\mathbf{v}-\mathbf{v}^{k}}^{2}\mid \mathbf{v}\in \mathcal{V}\}$\;
$s_{k}=[\alpha+(1-\alpha)^{2}]\frac{1}{\sigma r_{k}}\Vert\nabla \boldsymbol{\Phi}(\tilde{\mathbf{v}}^{k})\nabla \boldsymbol{\Phi}(\tilde{\mathbf{v}}^{k})^{T}\Vert$\;
$\tilde{\mathbf{w}}^{k}=\arg\max\{\mathcal{L}(\tilde{\mathbf{v}}^{k},\mathbf{w})+\alpha\mathbf{w}^{T}\nabla \boldsymbol{\Phi}(\tilde{\mathbf{v}}^{k})(\tilde{\mathbf{v}}^{k}-\mathbf{v}^{k})-\frac{s_{k}}{2}\Vert\mathbf{w}-\mathbf{w}^{k}\Vert^{2}\mid \mathbf{w}\in \mathcal{W}\}$\;
\emph{\% The correction step:}\\
$\boldsymbol{\eta}^{k+1}=\boldsymbol{\eta}^{k}-\mathbf{M}_{k}(\boldsymbol{\eta}^{k}-\tilde{\boldsymbol{\eta}}^{k})$\;
$err=\frac{1}{2}(\Vert\mathbf{v}^{k+1}-\mathbf{v}^{*}\Vert+\Vert\mathbf{w}^{k+1}-\mathbf{w}^{*}\Vert)$\;
$k=k+1$\;
} 
\Return { $\mathbf{v}^{opt}=\mathbf{v}^{k}$}\;
 \end{algorithm}

\section{Numerical Experiments}\label{section4}
In this section, to evaluate the performance of the proposed GPD-CM, we constructed an SPP with an exponential coupling operator derived from chemical reaction control, battery charging control, and population dynamics control:
\begin{equation}
\min_{\mathbf{v}\in \mathbb{R}^{n}}\max_{\mathbf{w}\in \mathbb{R}^{m}_{+}}\mathcal{L}(\mathbf{v},\mathbf{w})=\Vert\mathbf{A}\mathbf{v}-\mathbf{a}\Vert^{2}+\langle\mathbf{w},e^{\mathbf{B}\mathbf{v}}-\mathbf{b}\rangle-\Vert\mathbf{C}\mathbf{w}-\mathbf{c}\Vert^{2},\nonumber
\end{equation}
where the dimensions of constants are listed as $\mathbf{A}\in \mathbb{R}^{n\times n}$, $\mathbf{B}\in \mathbb{R}^{m\times n}$, $\mathbf{C}\in \mathbb{R}^{m\times m}$, $\mathbf{a}\in \mathbb{R}^{n}$, $\mathbf{b}\in \mathbb{R}^{m}$ and $\mathbf{c}\in \mathbb{R}^{m}$. 
We use the GPD-CM to solve the above problem. In the prediction step, for the $k$-th iteration, we first generate $r_{k}$ by using (\ref{old26}) and then solve the following problem: 
\begin{equation}\tilde{\mathbf{v}}^{k}=\arg\min\{\mathcal{L}(\mathbf{v},\mathbf{w}^{k})+\frac{r_{k}}{2}\norm{\mathbf{v}-\mathbf{v}^{k}}^{2}\mid \mathbf{v}\in \mathbb{R}^{n}\}.\label{old29}\end{equation}
We can obtain it by the gradient descent method. Next, we update the values of $s_{k}$ and $\hat{\mathbf{w}}^{k}$ by using (\ref{old27}) and the following iterative scheme:
\begin{equation}\begin{aligned}\hat{\mathbf{w}}^{k}&=\arg\max\{\mathcal{L}(\tilde{\mathbf{v}}^{k},\mathbf{w})+\alpha\mathbf{w}^{T}\nabla \boldsymbol{\Phi}(\tilde{\mathbf{v}}^{k})(\tilde{\mathbf{v}}^{k}-\mathbf{v}^{k})\\
&\qquad \qquad \qquad \qquad\quad-\frac{s_{k}}{2}\Vert\mathbf{w}-\mathbf{w}^{k}\Vert^{2}\mid \mathbf{w}\in \mathbb{R}^{m}\}\\
&=(2\mathbf{C}^{T}\mathbf{C}+s_{k}\mathbf{I}_{m})^{-1}[\alpha\mbox{diag}(e^{\mathbf{B}\tilde{\mathbf{v}}^{k}})\mathbf{B}(\tilde{\mathbf{v}}^{k}-\mathbf{v}^{k})\\&\qquad\qquad\qquad\qquad\quad+e^{\mathbf{B}\mathbf{v}}-\mathbf{b}+2\mathbf{C}^{T}\mathbf{c}+s_{k}\mathbf{w}^{k}],\nonumber
\end{aligned}\end{equation}
where $\mbox{diag}(\cdot)$ denotes the diagonal matrix formed by a vector. We project the variable $\hat{\mathbf{w}}^{k}$ on the set $\mathbb{R}^{m}_{+}$, as shown below:
\begin{equation}\tilde{w}^{k}_{i}=\max\{\hat{w}^{k}_{i},0\},\ i=1,\cdots m.\nonumber\end{equation}
In the correction step, we correct the predictive variables:
\begin{equation} \left(\begin{array}{c}
\mathbf{v}^{k+1}\\[0.1cm]
\mathbf{w}^{k+1}
\end{array}
\right)= \left(\begin{array}{c}
\mathbf{v}^{k}\\[0.1cm]
\mathbf{w}^{k}
\end{array}
\right)- \mathbf{M}_{k}\left(\begin{array}{c}
\mathbf{v}^{k}-\tilde{\mathbf{v}}^{k}\\[0.1cm]
\mathbf{w}^{k}-\tilde{\mathbf{w}}^{k}
\end{array}
\right),\nonumber
\end{equation}
where the corrective matrix is given as
\begin{equation}
\mathbf{M}_{k}=\left(\begin{array}{cc}
\mathbf{I}_{n} & \frac{\alpha-1}{r_{k}} [\mbox{diag}(e^{\mathbf{B}\tilde{\mathbf{v}}^{k}})\mathbf{B}]^{T} \\[0.1cm]
\boldsymbol{0}&\mathbf{I}_{m} 
\end{array}
\right). \nonumber
\end{equation}

\begin{figure}[t]
    \centering
    \includegraphics[width=3.35in]{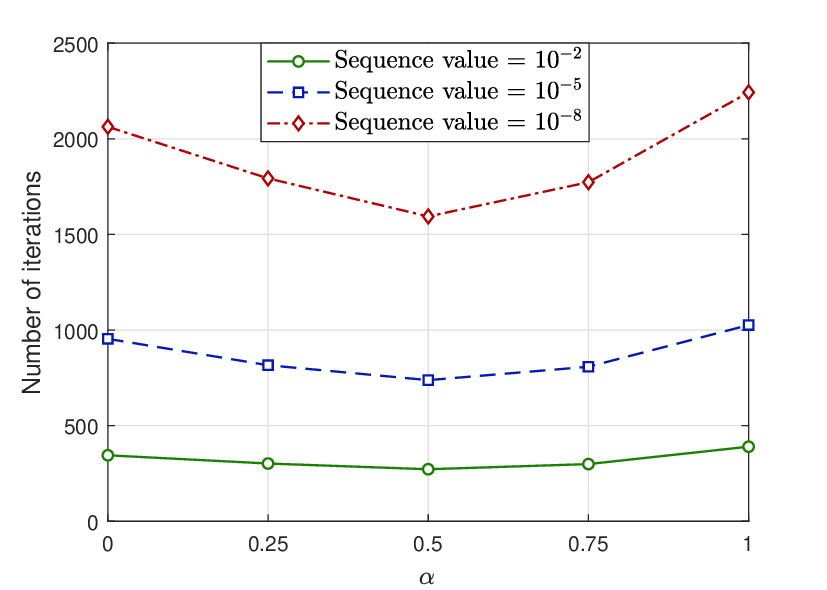}
    \caption{Different sequence values: Number of iterations versus values of $\alpha$.}
    \label{fig:1}
\end{figure}

\noindent In the experiment, the dimension of $(n,m)$ is set to $10\times10$, and all matrices and vectors are randomly generated in Matlab by using the command $\mathsf{rand(m,n)}$ while ensuring that $\mathcal{L}(\mathbf{v},\mathbf{w})$ is an SPP. The initial values of $(\mathbf{v},\mathbf{w})$ are set to $\mathbf{v}^{0}=2\cdot\mathsf{ones(n,1)}$ and $\mathbf{w}^{0}=\mathsf{ones(m,1)}$, respectively. The regularization parameters are set to $\mu=1, \sigma=0.95,$ and $\alpha=0.5$. In addition, all methods are implemented serially. The sequence value is defined as $\frac{1}{2}(\Vert\mathbf{v}^{k+1}-\mathbf{v}^{*}\Vert+\Vert\mathbf{w}^{k+1}-\mathbf{w}^{*}\Vert)$. All simulation results are averaged over 100 random experiments. The source code of this project can be found on GitHub at \url{https://github.com/SaiWang-Neo/GPD-CM-for-SPP}.

\begin{figure}[t]
    \centering
    \includegraphics[width=3.35in]{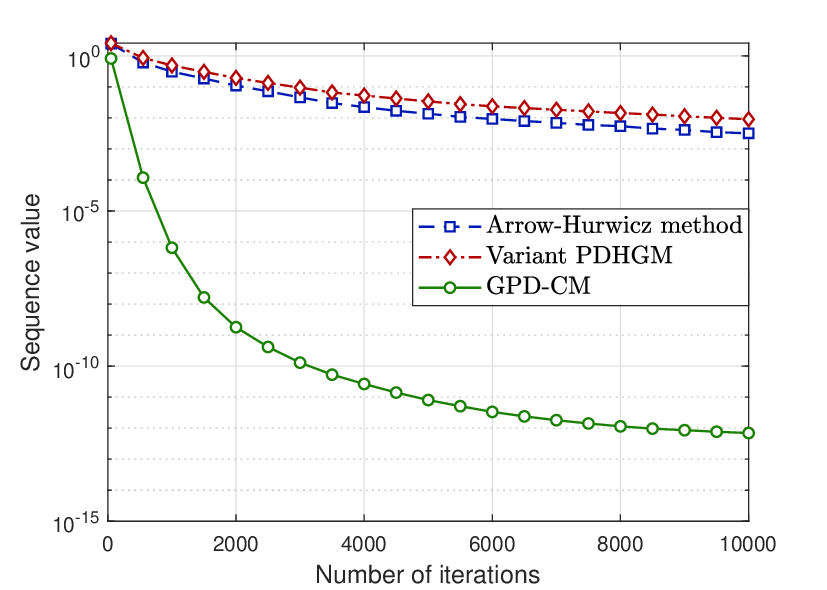}
    \caption{Comparsion of methods: Sequence value versus the number of iterations.}
    \label{fig:2}
\end{figure}

Fig. \ref{fig:1} illustrates the number of iterations versus different values of \( \alpha \) for different sequence values of the GPD-CM. It can be observed that the sequence value of the proposed method decreases as the number of iterations increases. Notably, under the convergence condition being satisfied, the method performs best when \( \alpha = 0.5 \), as it achieves the minimum lower bound of \( r_{k} \cdot s_{k} \), which equals \( \frac{0.75}{\sigma}\Vert\nabla \boldsymbol{\Phi}(\tilde{\mathbf{v}}^{k})\nabla \boldsymbol{\Phi}(\tilde{\mathbf{v}}^{k})^{T}\Vert \). The results confirm that a theoretical lower bound of \( r_{k} \cdot s_{k} \) indeed leads to an enlarged step size.

Fig. \ref{fig:2} depicts the sequence value versus the number of iterations for the Arrow-Hurwicz method \cite{R16}, variant PDHGM \cite{R3}, and GPD-CM. Although all methods converge, the proposed GPD-CM demonstrates a notably faster convergence rate compared to the others. This is attributed to the dynamic adjustment of regularization factors in our method, whereas the other methods rely on fixed regularization factors, often requiring larger values to ensure convergence, leading to slower convergence rates.

\section{Conclusion}\label{section5}

In this paper, we proposed a GPD-CM for solving SPPs featuring a nonlinear coupling operator. By leveraging a variational framework, we directly obtain the convergence of the proposed method. The proposed method has an ergodic convergence rate of $O(1/t)$. Numerical results show that our proposed method significantly outperforms the other methods in terms of the convergence rate.

\appendix
\label{app1}

 By the Schur complement lemma, we know that $\boldsymbol{\Sigma}_{k}\succ0$ if and only if $r_{k}s_{k}\mathbf{I}_{m}-\alpha\nabla \boldsymbol{\Phi}(\tilde{\mathbf{v}}^{k})\nabla \boldsymbol{\Phi}(\tilde{\mathbf{v}}^{k})^{T}\succ0$ holds. Let the singular value decomposition of $\nabla \Phi (\tilde{\mathbf{v}}{}^{k})$ be equal to $\mathbf{U}\boldsymbol{\Lambda}\mathbf{V}^{T}$, where $\mathbf{U}$ and $\mathbf{V}$ are unitary matrices and $\boldsymbol{\Lambda}$ is a rectangular diagonal matrix with the singular values lying on the diagonal.
\begin{equation}
\begin{aligned}
r_{k}s_{k}&\boldsymbol{I}_{m}-\alpha\nabla \Phi (\tilde{\mathbf{v}}{}^{k})\nabla \Phi (\tilde{\mathbf{v}}{}^{k})^{T}\\
=& r_{k}s_{k}\boldsymbol{I}_{m}-\alpha\mathbf{U}\boldsymbol{\Lambda}\mathbf{V}^{T}\mathbf{V}\boldsymbol{\Lambda}\mathbf{U}^{T}\\
=& r_{k}s_{k}\boldsymbol{I}_{m}-\alpha\mathbf{U}\boldsymbol{\Lambda}^{2}\mathbf{U}^{T}\\
=& \mathbf{U}\left(r_{k}s_{k}\boldsymbol{I}_{m}-\alpha\boldsymbol{\Lambda}^{2}\right)\mathbf{U}^{T}
\end{aligned}\label{eq30ee}
\end{equation}
For given $r_{k}>0, s_{k}>0$ and $\alpha\in[0, 1]$, the symmetric matrix $\boldsymbol{\Sigma}_{k}$ is positive definite if and only if $r_{k}s_{k}\boldsymbol{I}_{m}-\alpha\boldsymbol{\Lambda}^{2}\succ0$. According to (\ref{eq30ee}), to reach this condition, the value of $r_{k}s_{k}$ should be greater than the maximum singular value of $\nabla \Phi (\tilde{\mathbf{v}}{}^{k})\nabla \Phi (\tilde{\mathbf{v}}{}^{k})^{T}$, i.e., $r_{k}s_{k}>\alpha\rho(\nabla \Phi (\tilde{\mathbf{v}}{}^{k})\nabla \Phi (\tilde{\mathbf{v}}{}^{k})^{T})$. Since $\Vert \nabla \Phi (\tilde{\mathbf{v}}{}^{k})\nabla \Phi (\tilde{\mathbf{v}}{}^{k})^{T}\Vert\ge \rho(\nabla \Phi (\tilde{\mathbf{v}}{}^{k})\nabla \Phi (\tilde{\mathbf{v}}{}^{k})^{T})$ holds, we can take $r_{k}s_{k}>\alpha\Vert \nabla \Phi (\tilde{\mathbf{v}}{}^{k})\nabla \Phi (\tilde{\mathbf{v}}{}^{k})^{T}\Vert$. In the same way, we can derive that $\mathbf{G}_{k}\succ0$ if and only if $r_{k}s_{k}>[\alpha+(1-\alpha)^{2}]\Vert\nabla \boldsymbol{\Phi}(\tilde{\mathbf{v}}^{k})\nabla \boldsymbol{\Phi}(\tilde{\mathbf{v}}^{k})^{T}\Vert.$

\section*{CONFLICT OF INTEREST}
The authors declare that there are no conflicts of interest regarding the publication of this paper.

\begin{reference}

\bibitem{MM1}
\doi{A.~Cherukuri, E.~Mallada, S.~Low, and J.~Cortés, ``The role of convexity in saddle-point dynamics: Lyapunov function and robustness,'' \textit{  IEEE Transactions on Automatic Control}, vol.~63, no.~8, pp.~2449-2464, Aug. 2018.}{10.1109/TAC.2017.2778689}

\bibitem{MM2}
\doi{M.~Burger, D.~Zelazo, and F.~Allgower, ``Hierarchical clustering of dynamical networks using a saddle-point analysis,'' \textit{ IEEE Transactions on Automatic Control}, vol.~58, no.~1, pp.~113-124, Jan. 2013.}{10.1109/TAC.2012.2206695}

\bibitem{MM3}
\doi{B.~Touri and B.~Gharesifard, ``A modified saddle-point dynamics for distributed convex optimization on general directed graphs,'' \textit{ IEEE Transactions on Automatic Control}, vol.~65, no.~7, pp.~3098-3103, July 2020.}  {10.1109/TAC.2019.2947184}

\bibitem{JC1}
\doi{F.~Liu, J.~Wang, H. Zhang, and P.~Li, ``A discrete-time projection neural network for solving convex quadratic programming problems with hybrid constraints,''  \textit{ International Journal of Control, Automation and Systems}, vol., 21, no.~1, pp.~328-337, Jan. 2023. }{10.1007/s12555-021-0236-4}

\bibitem{JC2}
 \doi{J.~Yang, M.~Wei, Y.~Wang, and Z.~Zhao, ``Push-sum distributed dual averaging online convex optimization with bandit feedback,'' \textit{ International Journal of Control, Automation and Systems}, vol., 22, no.~5, pp.~1461-1471, May 2024. }{10.1007/s12555-023-0211-3}

\bibitem{L1}
\doi{K.~J. Arrow, L.~Hurwicz, and H.~Uzawa, ``Studies in linear and non-linear programming,'' \textit{ Stanford University Press}, vol.~60s, p.~229, 1958. }{10.1017/S0008439500025522}

\bibitem{R4}
\doi{A.~Chambolle and T.~Pock, ``A first-order primal-dual algorithm for convex problems with applications to imaging,'' \textit{ Journal of Mathematical Imaging and Vision}, vol.~40,  p.~120-145, May 2011.}{10.1007/s10851-010-0251-1}

\bibitem{New2}
\doi{K.~Bredies, E.~Chenchene, D.~A. Lorenz, and E.~Naldi, ``Degenerate preconditioned proximal point algorithms,'' \textit{ SIAM Journal on Optimization}, vol.~32, pp.~2376-2401, 2021.}{10.1137/21M1448112}

\bibitem{R9}
\doi{Y.~Chen, G.~Lan, and Y.~Ouyang, ``Optimal primal-dual methods for a class of saddle point problems,'' \textit{  SIAM Journal on Optimization}, vol.~24, pp.~1779-1814, 2013.}{10.1137/130919362}

\bibitem{R10}
\doi{T.~Goldstein, M.~Li, X.~Yuan, E.~Esser, and R.~Baraniuk, ``Adaptive primal-dual hybrid gradient methods for saddle-point problems,'' \textit{ arXiv: Numerical Analysis}, 2013.}{10.48550/arXiv.1305.0546}

\bibitem{R5}
\doi{B.~He, F.~Ma, S.~Xu, and X.~Yuan, ``A generalized primal-dual algorithm with improved convergence condition for saddle point problems,'' \textit{  SIAM Journal on Imaging Sciences}, vol.~15, pp.~1157-1183, 2021.}{10.1137/21M1453463}

\bibitem{R11}
\doi{A.~Mokhtari, A.~E. Ozdaglar, and S.~Pattathil, ``Convergence rate of $\mathcal{O}(1/k)$ for optimistic gradient and extragradient methods in smooth convex-concave saddle point problems,'' \textit{ SIAM Journal on Optimization}, vol.~30, pp.~3230-3251, 2019.}{10.1137/19M127375X}

\bibitem{R15}
\doi{Y.~Tominin, V.~Tominin, E.~Borodich, D.~Kovalev, P.~Dvurechensky, A.~Gasnikov, and S.~Chukanov, ``On accelerated methods for saddle-point problems with composite structure,'' \textit{ Computer Research and Modeling} vol.~15, no.~2, pp.~433-467, 2023.}{10.20537/2076-7633-2023-15-2-433-467}

\bibitem{R14}
\doi{M.~O. Karabag, D.~Fridovich-Keil, and U.~Topcu, ``Alternating direction method of multipliers for decomposable saddle-point problems,'' \textit{ 2022 58th  Annual Allerton Conference on Communication, Control, and Computing  (Allerton)}, pp.~1-8, 2022.}{10.1109/Allerton49937.2022.9929349}

\bibitem{R3}
\doi{Y.~Gao and W.~Zhang, ``An alternative extrapolation scheme of {PDHGM} for saddle point problem with nonlinear function,'' \textit{ Computational Optimization and Applications}, vol.~85, pp.~263-291, 2023.}{10.1007/s10589-023-00453-8}

\bibitem{R12}
\doi{A.~Chambolle and T.~Pock, ``On the ergodic convergence rates of a first-order primal-dual algorithm,'' \textit{ Mathematical Programming}, vol.~159, pp.~253-287, 2016.}{doi.org/10.1007/s10107-015-0957-3}

\bibitem{R6}
\doi{B.~He and X.~Yuan, ``Convergence analysis of primal-dual algorithms for a saddle-point problem: {From} contraction perspective,'' \textit{ SIAM Journal on Imaging Sciences}, vol.~5, pp.~119-149, 2012.}{10.1137/100814494}

\bibitem{R16}
\doi{B.~He, S.~Xu, and X.~Yuan, ``On convergence of the {Arrow-Hurwicz} method for saddle point problems,'' \textit{ Journal of Mathematical Imaging and Vision}, vol.~64, pp.~662-671,  2022.}{10.1007/s10851-022-01089-9}
  
  \bibitem{Hp2}
\doi{B.~He and X.~Yuan, ``On the {$O(1/n)$} convergence rate of the {Douglas–Rachford} alternating direction method,'' \textit{ SIAM Journal on Numerical Analysis}, vol.~50, no.~2, pp.~700-709, 2012.}{10.1137/110836936}

\end{reference}

\biography{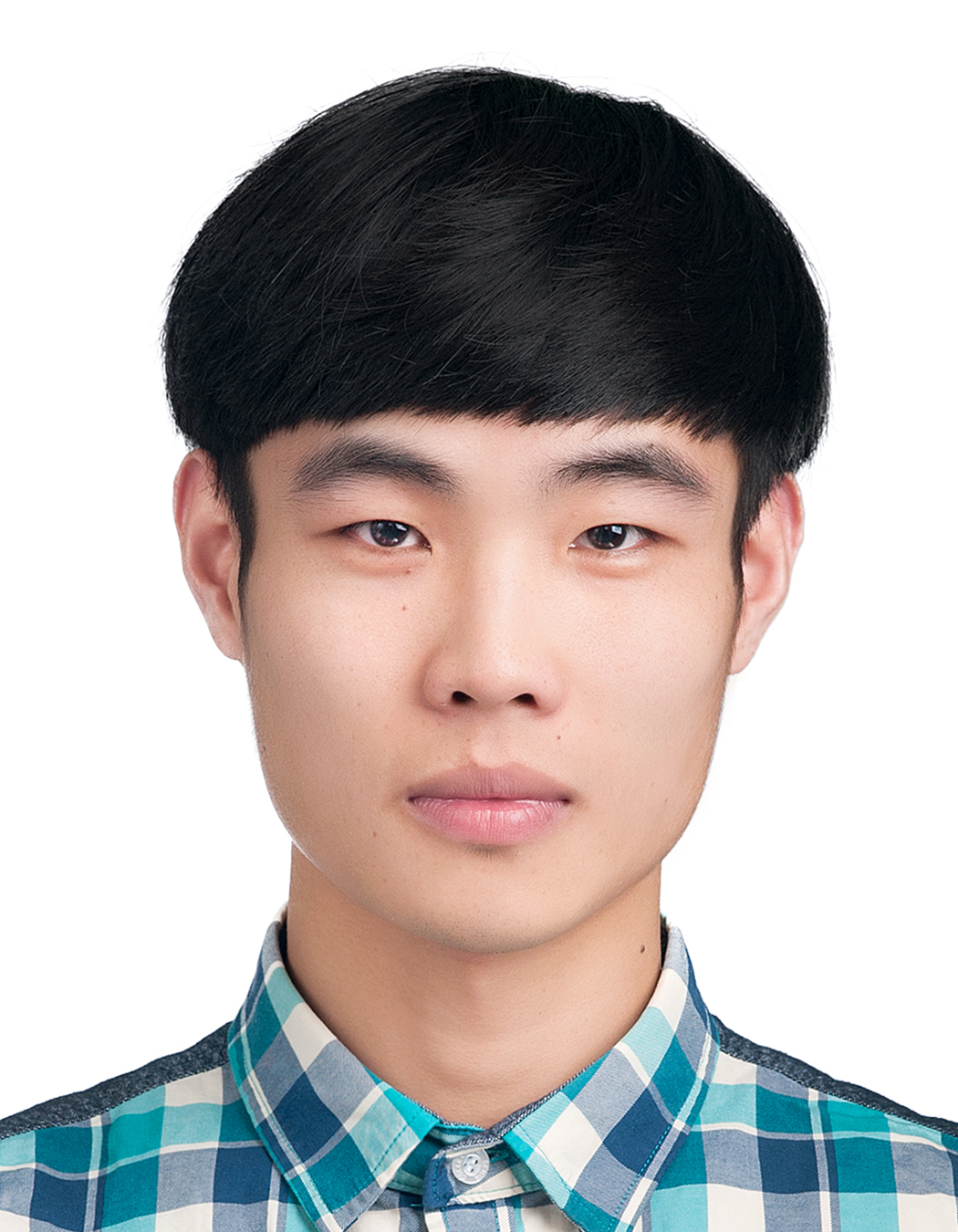}{Sai Wang}{received his M.S. degree in  Information and Telecommunication Engineering from Soongsil University, Seoul, South Korea, in 2019, and a Ph.D. degree in Applied Mathematics from Southern University of Science and Technology (SUSTech), Shenzhen, China, in 2023. He is serving as a postdoctoral researcher in the Department of Electrical and Electronic Engineering at SUSTech. His research interests include nonlinear control, numerical analysis, and machine learning.}

\biography{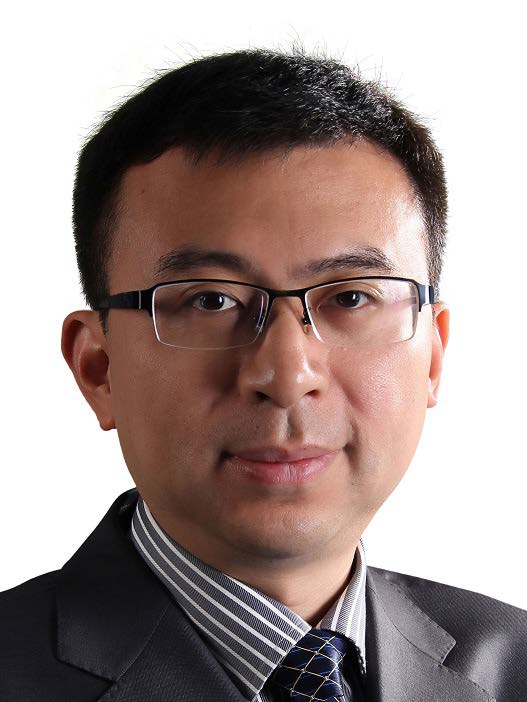}{Yi Gong}{received the B.Eng. and M.Eng. degrees from the Southeast University and the Ph.D. degree from the Hong Kong University of Science and Technology, all in electrical engineering. He was with the Hong Kong Applied Science and Technology Research Institute, Hong Kong, and Nanyang Technological University, Singapore. He is currently a Professor at the Southern University of Science and Technology, Shenzhen, China. His research interests include nonlinear control, mobile computing, and signal processing for wireless communications and related applications. He was on the Editorial Board of the IEEE \textsc{Transactions on Wireless Communications} and the IEEE \textsc{Transactions on Vehicular Technology}.}

\clearafterbiography
\relax 

\end{document}